\documentclass[11pt]{article}
\usepackage[utf8]{inputenc}
\usepackage{lmodern}
\usepackage{subfiles}
\usepackage{enumitem}
\setenumerate{topsep=6pt,ref={\normalfont(\arabic*)},label={\normalfont(\arabic*)}, itemsep=0pt} % or \upshape
        
\usepackage{amsfonts}
\usepackage{amsthm}
\usepackage{amsmath}
\usepackage{amssymb}
\usepackage{amscd}
\usepackage{mathrsfs}
\usepackage{mathtools}
\usepackage{bbm}
\usepackage{esint}

\usepackage[margin=2.6cm]{geometry}
\usepackage{setspace}
\usepackage{indentfirst}
\usepackage{graphicx}
\usepackage{graphics}
\usepackage{lscape}
\usepackage{pgf,tikz}
\usepackage{tikz-cd}
\usepackage{color}
\usepackage{pict2e}
\usepackage{epic}
\usepackage{epstopdf}
\usepackage{titlesec, titlefoot}
	\titleformat{\section}[block]{\Large\bfseries\filcenter}{\thesection}{1em}{}
\usepackage{commath}
\usepackage{float}
\usepackage{caption}
\usepackage{etoolbox}
\usepackage[affil-it]{authblk}
\usepackage{combelow}

\usepackage[  colorlinks=true,
  allcolors=black,
  urlcolor=blue,
  bookmarksdepth=3
  ]{hyperref}
\hypersetup{bookmarksopen=true} 
\usepackage{hypcap}

\graphicspath{{./Pictures/}}
\allowdisplaybreaks

\expandafter\def\expandafter\normalsize\expandafter{%
    \normalsize
    \setlength\abovedisplayskip{6pt}
    \setlength\belowdisplayskip{6pt}
    \setlength\abovedisplayshortskip{6pt}
    \setlength\belowdisplayshortskip{6pt}
}

\theoremstyle{plain}

\renewcommand*\thesection{\arabic{section}}
\numberwithin{equation}{section}

\newtheorem{theorem}{Theorem}[section]
\newtheorem{lemma}[theorem]{Lemma}
\newtheorem*{lemma*}{Lemma}

\theoremstyle{definition}

\newtheorem{remark}[theorem]{Remark}

\expandafter\let\expandafter\oldproof\csname\string\proof\endcsname
\let\oldendproof\endproof
\renewenvironment{proof}[1][\proofname]{%
  \oldproof[\upshape \bfseries #1]%
}{\oldendproof}
\makeatletter
\def\@makechapterhead#1{%
  \vspace*{50\p@}%
  {\parindent \z@ \raggedright \normalfont
    \interlinepenalty\@M
    \Huge\bfseries  \thechapter.\quad #1\par\nobreak
    \vskip 40\p@
  }}
\makeatother

\def \a{\alpha}
\def \R {\mathbb{R}}
\newcommand{\B}{\mathbb B}

\def \D{\textup{D}}

\def \e{\varepsilon}
\def \d{\textup{d}}

\def \p{\partial}
\def \mc{\mathcal}
\def \mb{\mathbb}

\def \tp{\textup}

\def \loc{\textup{loc}}

\newcommand{\Sn}{\mathbb S^{n-1}}

\DeclareMathOperator{\ddiv}{div}

\DeclareMathOperator{\Id}{Id}

\DeclareMathOperator{\sgn}{sgn}

\begin{document}

	\title{\textbf{On the sharp H\"older exponent\\ in the De Giorgi--Nash--Moser theory}}
		
	\author{{\Large Andr\'e Guerra}}
		
	\affil{\small Department of Pure Mathematics and Mathematical Statistics,  University of Cambridge,\protect\\  Wilberforce Rd, Cambridge CB3 0WB, UK
	\protect\\
	{\tt{adblg2@cam.ac.uk}}  }
			
	\date{}
	
	\maketitle

%	\vspace{-.5cm}	
	
	\begin{abstract}
	We consider solutions of uniformly elliptic equations with measurable coefficients. We assume that the lowest eigenvalue of the coefficient matrix is at least $K^{-1}$ and the largest eigenvalue is at most $K$. In three and higher dimensions we construct $\alpha$-H\"older continuous solutions with $\alpha = \exp(- c_n K)$. This disproves a long-standing conjecture by showing that, except for the two-dimensional case, the H\"older exponent obtained from the Bombieri--Giusti Harnack inequality has the optimal dependence on the ellipticity constant $K$. Using the same construction as a starting point, we disprove a conjecture of De Giorgi about continuity of solutions to non-uniformly elliptic equations.
	\end{abstract}

\section{Introduction}

\subsection{Uniformly elliptic equations}
In this note we consider weak solutions $u\in W^{1,2}(\mb B^n)$ of uniformly elliptic equations
\begin{equation}
\label{eq:PDE}
\ddiv(A\,\D u)=0 \quad \text{in } \mb B^n.
\end{equation}
Here and throughout, $A\colon \mb B^n\to \textup{Sym}_n$ is a measurable symmetric matrix field satisfying 
\begin{equation}
\label{eq:ell}
\frac 1 K |\xi|^2 \leq A(x)\xi\cdot \xi \leq K |\xi|^2 \quad \text{for a.e.\ } x\in \mb B^n, \xi \in \R^n.
\end{equation}
In their seminal works, De Giorgi \cite{DeGiorgi1957} and Nash \cite{Nash1958} independently proved that weak solutions of \eqref{eq:PDE} are H\"older continuous,
\begin{equation}
\label{eq:Holder}
u\in C^{0,\alpha}(\tfrac 1 2 \mb B^n) \quad \text{for some } \alpha=\alpha(n,K)\in (0,1),
\end{equation}
leading to the solution of Hilbert's 19th problem on the regularity of minimizers of scalar uniformly convex variational problems.
A few years later, Moser \cite{Moser1960,Moser1961} gave another proof of \eqref{eq:Holder}, and in fact he obtained the stronger result that non-negative solutions of \eqref{eq:PDE} satisfy a Harnack inequality:
\begin{equation}
\label{eq:Harnack}
\sup_{\frac 1 2 \mb B^n} u \leq C_H(n,K) \inf_{\frac 1 2 \mb B^n} u.
\end{equation}
These are by now classical results, which the reader can find in a variety of textbooks on elliptic PDEs, and which have recently been formalized \cite{Armstrong2026}. 

The truly remarkable aspect of the De Giorgi--Nash--Moser theory is that the coefficients of \eqref{eq:PDE} are just \textit{measurable}. This is unlike Schauder theory, where one considers equations with \textit{continuous} coefficients, which can be seen as perturbations of equations with \textit{constant} coefficients. Instead, \eqref{eq:Holder}--\eqref{eq:Harnack} are non-perturbative results, as they follow from the ellipticity bounds \eqref{eq:ell} alone. Hence it is natural to ask how these results depend on the ellipticity constant $K$. 

The case of the Harnack inequality \eqref{eq:Harnack} is well understood, after the beautiful work of Bombieri--Giusti \cite{Bombieri1972}. Indeed, they showed that one can take
\begin{equation}
\label{eq:BG}
C_H(n,K) = \exp(c_n K).
\end{equation}
The dependence on $K$ is sharp already in dimension $n=2$, since
$$\frac 1 K \p_{11} u + K \p_{22} u=0, \qquad u(x_1,x_2)=e^{K x_1} \cos(x_2).$$
When $n=2$, one can even express the \textit{optimal constant} $C_H(2,K)$ in \eqref{eq:Harnack} in terms of elliptic integrals. Indeed, in two dimensions a solution $u$ of \eqref{eq:PDE} can be factorized as 
\begin{equation}
\label{eq:factorization}
u=h\circ f, \qquad h \text{ is harmonic, $\quad f$ is $K$-quasiconformal},
\end{equation}
see \cite[Theorem 16.2.1]{Astala2009}. The sharp form of the Harnack inequality then follows from the sharp Harnack inequality for harmonic functions and the sharp Schwarz Lemma for quasiconformal maps, see \cite[Theorem 16.2.2]{Astala2009} and \cite[Theorem 16.39]{Hariri2020}.

Instead, less is known about the H\"older exponent in \eqref{eq:Holder}. The two-dimensional case is classical and precedes the De Giorgi--Nash--Moser theory: it was shown by Morrey \cite{Morrey1938} that $K$-quasiconformal maps are $1/K$-H\"older continuous, hence by \eqref{eq:factorization} we have 
\begin{equation}
\label{eq:2d}
\frac 1 K \leq \alpha(2,K).
\end{equation}
A direct proof of this result was also given in an elegant paper by Piccinini--Spagnolo \cite{Piccinini1972}. Simple examples show that \eqref{eq:2d} is \textit{optimal}, and the natural extension of these examples to higher dimensions shows more generally that one can only have, for arbitrary solutions,
\begin{equation}
\label{eq:nd}
\alpha(n,K) \leq \frac 1 2 \left(\sqrt{(n-2)^2+ \frac{4(n-1)}{K^2}}-(n-2)\right)\leq \begin{cases}  \frac 1 K &\text{if } n=2, \\  \frac{2}{K^2} &\text{if } n\geq 3,\end{cases}
\end{equation}
%the last inequality holding for $n\geq 3$,
see \cite[\S 4]{Piccinini1972}. For operators with a particular structure, this exponent is optimal \cite{Greco2002}. Interestingly, the same examples also display the sharp dependence on $K$ in the $L^\infty$ estimate for solutions of \eqref{eq:PDE}, see \cite[Proposition 1.5]{Bella2024}, at least when $n\geq 4$.

We emphasize that the examples of Piccinini--Spagnolo are much more regular than what follows from the sharp Harnack inequality \eqref{eq:BG}, which gives only the bound
\begin{equation}
\label{eq:BGholder}
\exp(-c_n K )\leq \alpha(n,K).
\end{equation}
A conjecture in the field, often attributed to De Giorgi, asserts that this lower bound is highly suboptimal and that, instead, the optimal H\"older exponent has algebraic dependence on the ellipticity ratio, similarly to the examples of Piccinini--Spagnolo, see \cite[p.\ 70]{Armstrong2019} and \cite{Armstrong2022,Mosconi2018}. 
In this paper we give an example showing that, to the contrary, \eqref{eq:BGholder} is sharp as soon as $n\geq 3$. Precisely, we have:

\begin{theorem}\label{thm:main}
Let $n\geq 3$. For each $\gamma\geq 1$ there is a measurable symmetric matrix field $A_\gamma\in L^\infty(\mb B^n, \textup{Sym}_n)$ satisfying the ellipticity conditions
\begin{equation}
\label{eq:ellmain}
\frac 1 {K_\gamma} |\xi|^2 \leq A_\gamma(x)\xi\cdot \xi\leq  K_\gamma |\xi|^2\quad \text{for a.e.\ } x\in \mb B^n, \xi \in \R^n,
\end{equation}
and a positively $\alpha_\gamma$-homogeneous function $u_\gamma\in W^{1,2}(\mb B^n)$ which is a weak solution of
\begin{equation}
\label{eq:PDEmain}
\ddiv(A_\gamma \D u_\gamma)=0 \quad \text{in } \mb B^n,
\end{equation}
and such that, for some constants $C_n,c_n>0$,
$$\alpha_\gamma \leq \exp(-c_n K_\gamma), \qquad K_\gamma = C_n \gamma.$$
\end{theorem}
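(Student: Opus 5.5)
Since $u_\gamma$ is to be positively $\alpha$-homogeneous, I would write $u(x)=r^\alpha\phi(\omega)$ with $r=|x|$, $\omega\in\Sn$, and choose $A$ also $0$-homogeneous, $A(x)=\mathcal A(\omega)$. Writing $\D u = r^{\alpha-1}(\alpha\phi\,\omega+\nabla_{\Sn}\phi)$, the equation \eqref{eq:PDEmain} becomes an equation on the sphere. If $\mathcal A(\omega)=a(\omega)\,\omega\otimes\omega + B(\omega)$, with $B$ acting on the tangent space $T_\omega\Sn$, the equation reads
\begin{equation*}
\ddiv_{\Sn}\big(B\nabla_{\Sn}\phi\big)+\alpha(n-2+\alpha)\,a\,\phi=0 \quad \text{on } \Sn .
\end{equation*}
For $\alpha>0$ we also have $u\in W^{1,2}(\mb B^n)$ when $n\ge 3$, and the equation holds across the origin because a point has zero capacity. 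So the problem reduces to this: find a spherical operator with ellipticity of order $K$ whose first nonzero ``eigenvalue'' $\lambda=\alpha(n-2+\alpha)\approx (n-2)\alpha$ is exponentially small, $\lambda\lesssim e^{-cK}$. Here $\phi$ must change sign and satisfy $\int a\phi=0$.

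To make $\lambda$ tiny I would build a \emph{bottleneck} on $\Sn$, which needs $n-1\ge 2$; this is exactly where $n\ge3$ enters. Take $\phi$ depending only on the polar angle $\theta$, equal to $+1$ near the north pole and $-1$ near the south pole, and confine the transition to a thin band. The Rayleigh quotient is $\int \langle B\nabla\phi,\nabla\phi\rangle / \int a\phi^2$. With bounded ellipticity this alone only gives a capacity ratio that is algebraic in $K$. So I would use an iterated construction: place $N\approx \gamma$ nested layers in the $\theta$ variable (or in $\log r$ as well, breaking pure homogeneity inside blocks and restoring it afterwards), each of which multiplies the effective conductance by a fixed factor $e^{-c}$ at the cost of only a bounded amount of anisotropy. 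In each layer $B$ is of order $1/K$ across the band and $K$ along it, arranged so that ``conductance'' contributions compose multiplicatively rather than additively. Concretely, I would look for a first-order ODE system in $\theta$ of the form $(b(\theta)\sin^{n-2}\theta\,\phi')'+\lambda a\sin^{n-2}\theta\,\phi=0$ and choose $a,b\in[K^{-1},K]$ piecewise. The aim is a Prüfer-angle argument in which each of the $\sim\gamma$ pieces shrinks the admissible $\lambda$ by a constant factor, so that $\lambda\le e^{-c\gamma}$ with $K=C_n\gamma$.

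The key step, and the main obstacle, is beating the Piccinini--Spagnolo bound $\alpha\gtrsim K^{-2}$, which holds for radial-type coefficients. A purely one-dimensional Sturm--Liouville reduction in $\theta$ is probably insufficient, since the $1$D eigenvalue with coefficients in $[K^{-1},K]$ is bounded below algebraically. So the construction must genuinely use two angular variables. I would first try to build $\phi$ on $\mathbb S^2$ (and extend trivially in the remaining variables) as a composition $\phi = \psi\circ F$, where $F$ is a self-similar map with $\sim\gamma$ levels, each level having distortion bounded by a universal constant. The total distortion then grows only linearly in the number of levels, since the levels are laid out spatially rather than composed pointwise, while $\lambda$ decays geometrically. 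Then $\mathcal A$ is defined as the pullback $\mathcal A=(\D F)^{-1}(\D F)^{-T}|\det \D F|$ of an isotropic operator. The delicate points are verifying that the pointwise distortion stays $\le C_n\gamma$, and that the weak formulation holds across the interfaces between levels. Once $\lambda\le e^{-c\gamma}$ is established, $\alpha\le \lambda/(n-2)\le \exp(-c_nK_\gamma)$ follows.
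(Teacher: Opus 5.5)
There is a genuine gap, and it defeats the whole strategy. Within your ansatz $\mathcal A=a\,\omega\otimes\omega+B$, where nothing couples the radial and tangential directions, no choice of $a$ and $B$ can make $\alpha$ smaller than order $K^{-2}$.

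To see this, test $-\ddiv_{\Sn}(B\nabla\phi)=\lambda a\phi$ with $1$. This gives $\int_{\Sn} a\phi\,\d\sigma=0$, so
$\int a\phi^2\,\d\sigma=\min_{c}\int a(\phi-c)^2\,\d\sigma\le K\int(\phi-\bar\phi)^2\,\d\sigma\le \frac{K}{n-1}\int|\nabla\phi|^2\,\d\sigma\le\frac{K^2}{n-1}\int B\nabla\phi\cdot\nabla\phi\,\d\sigma$.
Testing with $\phi$ then yields $\alpha(\alpha+n-2)=\lambda\ge (n-1)K^{-2}$. This is exactly the Piccinini--Spagnolo exponent in \eqref{eq:nd}.

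The argument does not care whether $\phi$ depends on one or several angular variables. It also does not care whether $a,B$ are layered, come from a Pr\"ufer-type construction, or are quasiconformal pullbacks $(\D F)^{-1}(\D F)^{-T}|\det \D F|$ with distortion $\le C\gamma$. Letting $a,B$ also depend on $\log r$ does not help either. Testing with $1$ forces $\int a(r,\cdot)\phi\,\d\sigma=0$ at a.e.\ radius. Testing with $\phi$ then gives a differential inequality for $r^{\alpha+n-2}\int a(r,\cdot)\phi^2\,\d\sigma$, which leads to the same bound.

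So your diagnosis that a one-dimensional reduction is the obstacle is wrong. The heuristic of conductances ``composing multiplicatively'' with bounded anisotropy per layer cannot be realized. A symmetric divergence-form operator on $\Sn$ with coefficients in $[K^{-1},K]$ has Rayleigh quotient within a factor $K^2$ of that of the Laplace--Beltrami operator.

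The missing idea is radial--tangential coupling, which becomes a first-order drift in the spherical equation. The paper takes the non-symmetric, $0$-homogeneous field $B_\gamma=\omega\otimes(b_\gamma\omega+Y_\gamma)+(\Id-\omega\otimes\omega)$, with $Y_\gamma=-\gamma\D_T V$, $V=1-\omega_n^2$ and $b_\gamma=M\gamma^2$. For $u=r^\alpha\phi$ the equation becomes $-\Delta_T\phi+(\alpha+n-2)\gamma\,\D_TV\cdot\D_T\phi=b_\gamma\alpha(\alpha+n-2)\phi$. This is an eigenvalue problem for the weighted Laplacian in $L^2(\Sn,e^{-tV}\d\sigma)$ with $t=(\alpha+n-2)\gamma$.

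The weight oscillates by a factor $e^{t}$, although $\gamma^{-1}B_\gamma$ has ellipticity only $\sim\gamma$. The large radial entry $b_\gamma$ is what makes $|Y_\gamma|\sim\gamma$ admissible. The double well of $V$ then gives $\mu_1(t)\lesssim e^{-ct}$ from a single odd test function $\eta(\omega_n)$, with no iteration needed.

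One then solves $b_\gamma\alpha(\alpha+n-2)=\mu_1((\alpha+n-2)\gamma)$ by the intermediate value theorem, using continuity of $\mu_1$ in $t$. Finally one symmetrizes via Lemma \ref{lemma:sym}. Its output depends on $\D u$ and, by the bound above, cannot be block-diagonal.

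This also shows where $n\ge 3$ really enters. The drift strength is $(\alpha+n-2)\gamma$, which stays of order $\gamma$ as $\alpha\to0$ only if $n\ge3$. In two dimensions it would be $\alpha\gamma$, consistent with Morrey's bound $\alpha\ge 1/K$. It is not that a bottleneck needs two angular dimensions.
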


The example is, in a sense, as simple as possible: the solution and the operator are positively homogeneous, hence the PDE becomes an eigenvalue problem on the sphere. We choose the operator in such a way that this is an eigenvalue problem for a weighted Laplacian on $\mb S^{n-1}$, with weight $e^{-t V}$ for $t\approx \gamma$ and $V$ a double-well potential. The condition $n\geq 3$ is needed only to solve the eigenvalue problem, and the solution of the PDE is the $\a$-homogeneous extension of the corresponding first eigenfunction. The two wells of $V$ guarantee that the eigenvalues of the weighted Laplacian are exponentially small, and hence so is $\alpha_\gamma$. This type of principle is well-known, see e.g.\ \cite{Freidlin1998,Holley1989} for much more sophisticated results, which may be useful if one wants to determine the \textit{optimal constant} $c_n$ in \eqref{eq:BGholder} as $K\to \infty$. 

\medskip
We conclude by mentioning two problems that, although related, are quite different from the one considered here. The first is to consider non-divergence form equations: even in two dimensions, the optimal H\"older exponent is not known \cite{Baernstein2005}. The second concerns higher integrability: it was shown by Meyers that solutions of \eqref{eq:PDE}--\eqref{eq:ell} are in $W^{1,p}_\loc(\mb B^n)$ for some $p=p(n,K)>2$ \cite{Meyers1963}, and in fact Iwaniec--Sbordone proved in \cite{Iwaniec2001a} that $p(n,K)\geq 2 + \frac{c}{K-1}$ for a \textit{universal} constant $c\geq \frac 2 7$. Up to determining the optimal value of $c$, this result is optimal; the optimal value is conjectured to be $c=2$, but this is only known when $n=2$, due to Astala's Area Distortion Theorem \cite[\S 13]{Astala2009}. 

\subsection{Non-uniformly elliptic equations}

We now turn to non-uniformly elliptic equations. This subsection was added in a revised version of the paper, after one of the referees brought several important references to the author's attention.

To formulate the problem and describe the state of the art, we consider the following generalization of \eqref{eq:ell}. Suppose there are measurable functions $0<\lambda\leq 1\leq \Lambda<\infty$ such that
\begin{equation}
\label{eq:nonunifell}
\lambda(x) |\xi|^2 \leq A(x)\xi\cdot \xi \leq \Lambda(x) |\xi|^2  \qquad \text{for a.e.\ } x\in \mb B^n, \xi \in \R^n.
\end{equation}
When both $\lambda^{-1}$ and $\Lambda$ are essentially bounded by $K$, condition \eqref{eq:nonunifell} is equivalent to \eqref{eq:ell}, but here we just assume that these functions are integrable. Let $H^1(\B^n,A)$ denote the completion of
$C^1(\B^n)$ under the norm
\begin{equation}\label{eq:energy-norm}
  \|v\|_{H^1(\B^n,A)}^2
  \equiv \int_{\B^n}(A\nabla v\cdot\nabla v+\Lambda v^2)\,\d x,
\end{equation}
and let
$H^1_0(\B^n,A)$ be the corresponding closure of
$C_c^1(\B^n)$. A function $u\in H^1(\B^n,A)$ is said to be a \textit{weak
solution} of \eqref{eq:PDE} if
\[
  \int_{\B^n}A\nabla u\cdot\nabla\varphi\,\d x=0
  \qquad\text{for every }\varphi\in H^1_0(\B^n,A).
\]
The notation $H^1(\mb B^n,A)$, rather than $W^{1,2}(\mb B^n,A)$, is deliberate. In this setting there is, in general, no analogue of the Meyers--Serrin ``$H=W$'' theorem: smooth functions need not be dense in the space of functions with finite $\|\cdot\|_{H^1(\mb B^n,A)}$-norm, see \cite[\S 2]{Franchi1998}.

Several extensions of the De Giorgi--Nash--Moser theory to the non-uniformly elliptic setting are available; see, for instance, \cite{Armstrong2026a,Bella2021,Bella2024,Fabes1982,Trudinger1971}. Concerning the Harnack inequality, the strongest result  in this direction is established in \cite{Bella2021}: if $u$ is a non-negative weak solution of \eqref{eq:PDE} and
\begin{equation}
\label{eq:exprange}
\frac 1 \lambda \in L^q(\mb B^n), \qquad \Lambda\in L^p(\mb B^n), \qquad \frac 1 p + \frac 1 q< \frac{2}{n-1},
\end{equation}
for some $p,q\in (1,\infty]$, then
\begin{equation}
\label{eq:harnackdeg}
\sup_{B_r} u\leq C(n,p,q,\mathcal K(B_r)) \inf_{B_r} u, \qquad \mathcal K(E) \equiv \bigg( \fint_E \Lambda^p\bigg)^\frac 1 p \bigg(\fint_E \lambda^{-q}\bigg)^\frac 1 q
\end{equation}
for every $r\in (0,\frac 1 2)$. The range of exponents in \eqref{eq:exprange} is optimal, at least when $n\geq 4$ \cite{Franchi1998}. If one makes the additional assumption that $\lambda,\Lambda$ are $A_2$-weights, then one can obtain a dependence of the constant in \eqref{eq:harnackdeg} similar to that in \eqref{eq:BG}, see \cite[Theorem B and (1.4)]{Chanillo1986}.

Because the constant in \eqref{eq:harnackdeg} depends on $\mathcal K(B_r)$, estimate \eqref{eq:harnackdeg} does not in general imply H\"older continuity at arbitrarily small scales; it yields such regularity only on large scales, as explained in \cite[Corollary 4.2]{Bella2021}. For $n\geq 3$, to be able to obtain continuity by these methods essentially requires double-exponential integrability, see e.g.\ \cite[Theorem 5.2]{Trudinger1971} or \cite[Theorem 5.3]{Chanillo1986}. In fact, it follows from \cite{Armstrong2026a} that the following more precise condition
\begin{equation}
\label{eq:doubleexp}
\int_\Omega \exp \exp(c \lambda^{-\frac p2})  \, \d x + \int_\Omega \exp \exp(c \Lambda^{\frac q2})  \, \d x <\infty,
\end{equation}
where $p,q\in(1,\infty)$, $\frac 1 p + \frac 1 q =1$, and $c=c(n,p,q)>0$ is sufficiently large, suffices for continuity of solutions. At an endpoint, the condition on the uniformly controlled side is omitted; in particular, when $\lambda=1$ one may take $p=\infty$ and $q=1$. See also Remark \ref{rem:coarse} below.

In the notes \cite{DeGiorgi1995}, which initiated the circle of problems considered in this paper, De Giorgi conjectured a much stronger result, namely that, when $n\geq 3$, the condition
\begin{equation}
\label{eq:expintegrability}
\lambda = 1, \qquad  \int_\Omega \exp(\Lambda) \, \d x <\infty 
\end{equation}
should imply continuity of solutions to \eqref{eq:PDE}. He also conjectured that this condition cannot be weakened and proposed \textit{isotropic} coefficients---that is, coefficients of the form $A(x)=a(x)\Id$ for some scalar function $a\colon \Omega\to (0,\infty)$---which just fail \eqref{eq:expintegrability} and admit discontinuous solutions. This second conjecture was proved in \cite{Zhong2008}, which also contains the statements of De Giorgi's conjectures. See also \cite{Astala2010a,Franchi1998,Onninen2007} for related results.

After learning of these references through a referee report, the author felt that De Giorgi's continuity conjecture under \eqref{eq:expintegrability} was overly optimistic. The isotropic setting, proposed by De Giorgi as a test of the sharpness of \eqref{eq:expintegrability}, is typically much more rigid; see, for example, \cite[\S 3]{Piccinini1972}. By contrast, the example in Theorem \ref{thm:main} is far from isotropic. With the help of ChatGPT 5.6-Pro, the author could then modify the construction of Theorem \ref{thm:main} in order to disprove De Giorgi's conjecture. In fact, the following holds:

%This suggested a simple experiment: could an AI model, with \textit{minimal human assistance}, adapt that construction to disprove De Giorgi's continuity conjecture under \eqref{eq:expintegrability}? Traditionally, such a problem would have made an excellent PhD project: it requires a non-trivial modification of a recent construction and leads to a counterexample to a significant conjecture.
%The author therefore obtained a subscription to ChatGPT Pro, and gave the model only extremely limited guidance, similar to what a PhD supervisor might say to a student:
%\begin{quote}
%\textit{Give a counterexample to Conjecture 3 here:  \cite{Zhong2008}. To do so, adapt the example in [present manuscript]. Don't use the internet other than to look up these references.}
%\end{quote}
%The complete conversation is available at \cite{GTPchat2026}. After 25 minutes, the LLM produced an affirmative answer. Although its initial response was difficult to read, several requests for revision yielded an argument that the author was able to verify as a correct proof of the following result:

\begin{theorem}\label{thm:GPT}
Let $n\geq 3$. There exist a measurable symmetric matrix field
$A\colon \mb B^n\to \tp{Sym}_n$ with
\begin{equation}
\label{eq:elldeg}
|\xi|^2\leq A(x)\xi\cdot\xi
\leq \Lambda(x)|\xi|^2
\qquad\text{for a.e.\ }x\in\mb B^n,\ \xi\in\R^n,
\end{equation}
where, with $r=|x|$, we have
\begin{equation}
\label{eq:Lambda}
\Lambda(r)=C_n \log\Bigl(e+ \log \frac 1 r\Bigr)^2,
\end{equation}
and a weak solution
$u\in H^1(\mb B^n,A)$ of \eqref{eq:PDE} with no continuous representative at the origin.
%\begin{equation}
%\label{eq:expint}
%\int_{\mb B^n}\exp \Lambda(x)\,\d x<\infty,
%\end{equation}
\end{theorem}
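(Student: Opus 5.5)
The plan is to adapt the construction behind Theorem \ref{thm:main} to a slowly varying family. There, a positively $\alpha$-homogeneous solution comes from the first nontrivial eigenfunction of a weighted Laplacian on $\Sn$ with weight $e^{-tV}$, where $V$ is a double well, and the eigenvalue $\alpha$ is exponentially small in $t$. Here we let $t$ depend on the scale, and we want the accumulated decay $\int \alpha$ to be finite, so that $u$ does not decay to $0$ at the origin. Since the eigenfunction is close to $+1$ on one well and $-1$ on the other, $u$ will then keep an oscillation of order one on every small sphere, and hence has no continuous representative at $0$.

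\emph{Step 1: logarithmic coordinates.} Write $x=e^{-s}\theta$ with $s\in(0,\infty)$ and $\theta\in\Sn$. For coefficients $A$ that are block diagonal in the radial/tangential splitting, with radial entry $a(s,\theta)$ and tangential part $b(s,\theta)\Id_{T\Sn}$, the equation \eqref{eq:PDE} becomes a divergence-form equation on the cylinder $(0,\infty)\times\Sn$ with weight factor $e^{-(n-2)s}$. We look for solutions of the form
$$u=\exp\Bigl(-\int_0^s\alpha(\sigma)\,\d\sigma\Bigr)\,\varphi_s(\theta).$$

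\emph{Step 2: normalisation.} Impose $\lambda=1$ as in \eqref{eq:elldeg}. We split the weight $e^{\pm tV}$ between the radial and tangential coefficients, as in the proof of Theorem \ref{thm:main}, and rescale so that all entries are $\geq 1$. The aim is to check that the resulting largest eigenvalue is comparable to $t(s)^2$, so that the exponent satisfies $\alpha(s)\lesssim e^{-c\sqrt{\Lambda(s)}}$. With $\Lambda(s)=C_n\log(e+s)^2$, which is exactly \eqref{eq:Lambda} since $s=\log\frac1r$, we can then choose $t(s)\approx\frac{2}{c}\log(e+s)$. This gives $\alpha(s)\lesssim(e+s)^{-2}$, which is integrable, so the exponential prefactor tends to a positive constant.

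\emph{Step 3 (main obstacle): exact solution despite varying $t$.} Because $t$ varies with $s$, the product ansatz is not an exact solution, and the error term $\partial_s\varphi_s$ must be absorbed. I would not solve the PDE for given coefficients. Instead I would prescribe $u$ explicitly as a smooth interpolation of the eigenfunctions $\varphi_s$, and then define the coefficients so that $\ddiv(A\D u)=0$ holds identically. Concretely, I would adjust the radial coefficient, or add a symmetric off-diagonal radial/tangential term, to cancel the defect; this mirrors the \lq\lq choose the operator'' philosophy of Theorem \ref{thm:main}. Since $t'(s)\sim 1/s$ is small, the correction should be a small relative perturbation. The key estimate is that this correction keeps $A\geq\Id$ and stays $\lesssim t(s)^2$. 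This is the delicate point, because $\varphi_s$ has transition layers between the wells of width about $t^{-1/2}$, where derivatives are large.

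\emph{Step 4: membership and weak formulation.} It remains to verify $u\in H^1(\B^n,A)$. The term $\int\Lambda u^2$ is finite because $u$ is bounded and $\Lambda$ is integrable. For the energy, $|\D u|^2\lesssim r^{-2}\bigl(\alpha^2+|\nabla_\theta\varphi_s|^2\bigr)$, weighted by coefficients that are exponentially small on the transition layers, and the radial volume factor $r^{n-1}$ makes the integral converge when $n\geq3$. We approximate $u$ by truncations $u(1-\eta_\e)$ with cut-offs near $0$, showing that the capacity of the origin in the $A$-energy vanishes, which again uses $n\geq3$ and the mild growth of $\Lambda$. This shows that $u$ lies in the completion of $C^1$ and satisfies the weak equation against all of $H^1_0(\B^n,A)$. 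Finally, the limits of $u$ along the two wells are $\pm c\neq0$, so $u$ has no continuous representative at the origin.
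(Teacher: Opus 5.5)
Your overall philosophy matches the paper's: prescribe $u$, define the coefficients so that the equation holds exactly, and let a parameter of size $\log s$ vary with scale so that the accumulated decay is summable while $\Lambda\sim\log^2 s$. However, the two steps that carry the argument do not work as written.

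\textbf{Steps 1--2 cannot produce exponentially small exponents.} Take block-diagonal coefficients with radial entry $a$ and tangential part $b\Id$. A homogeneous solution $r^\alpha\varphi$ with $\alpha>0$ requires $-\ddiv_T(b\D_T\varphi)=\alpha(\alpha+n-2)a\varphi$ and $\int_{\Sn}a\varphi\,\d\sigma=0$. The Rayleigh quotient together with the Poincar\'e inequality on $\Sn$ then gives $\alpha(\alpha+n-2)\geq(n-1)\min b/\max a$. Under $1\leq a,b\leq\Lambda$ this forces $\alpha\geq 1/\Lambda\sim\log^{-2}s$, which is not integrable, so by your own accounting the prefactor $\exp(-\int_0^s\alpha)$ tends to $0$. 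Conversely, an exponentially small $\alpha$ forces $\max a/\min b$, hence $\Lambda$, to be exponentially large in $t$; that is exactly what splitting $e^{\pm tV}$ between the blocks produces. This is also not how Theorem \ref{thm:main} works. There the weight comes from the \emph{non-symmetric} radial row $(M\gamma^2,Y_\gamma^T)$, with $Y_\gamma=-\gamma\D_TV$ and $|Y_\gamma|\lesssim\gamma$. Its contribution $Y_\gamma\cdot\D_T\phi$ to the radial flux becomes, after the radial divergence, the first-order term $(\alpha+n-2)Y_\gamma\cdot\D_T\phi$ of $\mc L_t$, while the ellipticity ratio stays of order $\gamma^2$. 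The symmetric field is produced only afterwards, pointwise from $\D u$, via Lemma \ref{lemma:sym}, and it is far from block diagonal.

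\textbf{Step 3, which is the entire difficulty, has no mechanism, and you locate the danger in the wrong place.} An off-diagonal correction enters the radial flux as $Y\cdot\D_Tu$. But $\D_Tu$ vanishes at the poles and is exponentially small in the wells: for the paper's profile $\phi_\gamma=\tanh(\gamma\omega_n)/\tanh\gamma$, $\phi_\gamma'$ is of order $\gamma e^{-2\gamma|\omega_n|}$. The transition layer, where one divides by a \emph{large} gradient, is not where the problem lies. The paper handles this as follows.
\begin{itemize}
\item Writing $B\D u=r^{-1}(q\omega+\D_TU)$ turns the equation into $q_s-(n-2)q=\Delta_TU$. One takes $q$ to be the solution integrated from $s=\infty$; the homogeneous solution $e^{(n-2)s}=r^{2-n}$ would produce a Dirac mass at the origin.
\item At the poles $Y$ has no effect, so the equation there constrains the radial profile: $M\gamma^2a'=-q(s,1)$. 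Together with the formula for $q(s,1)$ and $a(\infty)=1$, this is a fixed-point problem that determines $a$. Your proposal does not address this compatibility condition.
\item Away from the poles, $|Y|\lesssim\gamma$ requires the defect $q+M\gamma^2U_s$ to be bounded by $C\gamma(1-\omega_n^2)\phi'_\gamma$, i.e.\ to be as exponentially small as the gradient in the wells. This is Lemma \ref{lem:angular}. It uses the explicit $\tanh$ profile, so that $\partial_\gamma\phi_\gamma$, $-\Delta_T\phi_\gamma-\mu_\gamma\phi_\gamma$ and $\phi_\eta-\phi_\gamma$ are bounded by explicit multiples of $(1-\omega_n^2)\phi'_\gamma$. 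It also uses $\gamma'=e^{-\gamma}$ and $M\gamma e^{-\gamma}\leq1$.
\end{itemize}
With genuine eigenfunctions of $\mc L_{t(s)}$ you would need comparably precise pointwise asymptotics in the wells, which the proposal neither states nor proves.
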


Note that, by \eqref{eq:Lambda}, we in particular have 
$\int_{\mb B^n}\exp(C\Lambda^p)\,\d x<\infty$ for all $p,C<\infty$, which is much stronger than \eqref{eq:expintegrability}. In fact, \eqref{eq:Lambda} also shows that
$$
\int_{\mb B^n} \exp (\exp(c \sqrt{\Lambda})) \, \d x <\infty, \qquad c\ll 1,
$$
and thus Theorem \ref{thm:GPT} implies that, up to finding the best value of $c$, \eqref{eq:doubleexp} is optimal: already in the case where $p=\infty, q=1$, if $c$ is too small then solutions are not necessarily continuous.

%To be precise, the original proof of ChatGPT concerned the symmetric case
%where $1/\lambda=\Lambda$ in \eqref{eq:nonunifell}, which is the one
%considered in \cite[Conjecture 3]{Zhong2008}, but 
%We remark that a trivial adaptation of
%the proof of Theorem \ref{thm:GPT} also leads to the stronger statement in Theorem \ref{thm:GPT}, with $\lambda=1\leq \Lambda$,
%which disproves \cite[Conjecture 1]{Zhong2008}. Similarly, one can also disprove \cite[Conjecture 2]{Zhong2008} for the case $\lambda\leq 1=\Lambda$.

%A proof of Theorem \ref{thm:GPT} is given in the appendix. It follows, in essence, the argument generated by ChatGPT-5.6 Pro, but has been fully rewritten and verified by the author. In the author's opinion, this is a fairly non-trivial modification of the construction in Theorem \ref{thm:main}, but we leave that judgement to the reader.

\subsection*{Acknowledgements}
AG thanks the Royal Society for support through a Newton International Fellowship. He thanks one of the referees for bringing several relevant references to his attention and for many insightful comments, S.\ Armstrong and T.\ Kuusi for checking the correctness of the first version of the paper and providing useful feedback, and S.\ Armstrong and J.\ de Dios Pont for very helpful discussions about AI as well as suggestions for how to rewrite the second version of this manuscript.

\subsection*{AI disclosure}
In the first version of this paper, ChatGPT-5.5 Plus had been used to proofread the text and look for typos and mistakes in its contents. Before writing the current version of the paper, the author obtained a subscription to ChatGPT-5.6 Pro, and used it to generate, essentially autonomously, a proof of Theorem \ref{thm:GPT}. The original proof was generated quite quickly (in 25 minutes) but it was extremely hard to read, and it took the author several days to properly understand and verify it. The proof presented here was written entirely by the author, who takes responsibility for its correctness.

%This suggested a simple experiment: could an AI model, with \textit{minimal human assistance}, adapt that construction to disprove De Giorgi's continuity conjecture under \eqref{eq:expintegrability}? Traditionally, such a problem would have made an excellent PhD project: it requires a non-trivial modification of a recent construction and leads to a counterexample to a significant conjecture.
%The author therefore obtained a subscription to ChatGPT Pro, and gave the model only extremely limited guidance, similar to what a PhD supervisor might say to a student:
%\begin{quote}
%\textit{Give a counterexample to Conjecture 3 here:  \cite{Zhong2008}. To do so, adapt the example in [present manuscript]. Don't use the internet other than to look up these references.}
%\end{quote}
%The complete conversation is available at \cite{GTPchat2026}. After 25 minutes, the LLM produced an affirmative answer. Although its initial response was difficult to read, several requests for revision yielded an argument that the author was able to verify as a correct proof of the following result:

%\clearpage
\section{Proof of  Theorem \ref{thm:main}}\label{sec:proof}

We begin by remarking that, although we formulated our results in the symmetric case for simplicity, the De Giorgi--Nash--Moser theory also holds for equations with non-symmetric coefficients. In fact, in proving Theorem \ref{thm:main} it will be useful to first construct a non-symmetric example. The next simple lemma shows that one can always reconstruct a symmetric example a posteriori:

\begin{lemma}\label{lemma:sym}
Let $B\in L^\infty(\mb B^n, \R^{n\times n})$ satisfy, for some $K\geq 1$,
\begin{equation}
\label{eq:ellB}
B(x)\xi\cdot \xi\geq \frac 1 K |\xi|^2 \quad \text{ and } \quad  B^{-1}(x)\xi\cdot \xi \geq \frac 1 K |\xi|^2\quad \text{for a.e.\ } x\in \mb B^n, \xi \in \R^n
\end{equation}
and let $u\in W^{1,2}(\mb B^n)$ be a weak solution of $\ddiv(B\,\D u)=0$. Then there is a symmetric matrix field $A \in L^\infty(\mb B^n,\textup{Sym}_n)$, satisfying \eqref{eq:ell} with $2K$ in place of $K$, and such that $\ddiv(A\, \D u)=0$ weakly.
\end{lemma}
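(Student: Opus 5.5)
The plan is to build $A$ pointwise from the two vector fields $E\equiv \D u$ and $J\equiv B\,\D u$, both in $L^2$. Note first that if $A$ is any measurable symmetric field with $A(x)E(x)=J(x)$ a.e., then $A\,\D u = B\,\D u$ a.e. Hence $\ddiv(A\,\D u)=\ddiv(B\,\D u)=0$ weakly, and the only issues are symmetry, ellipticity and measurability. The key input from \eqref{eq:ellB} is the two-sided control of the pairing $m\equiv J\cdot E$. The first condition gives $m = BE\cdot E\ge |E|^2/K$. Writing $E=B^{-1}J$, the second condition gives $m = B^{-1}J\cdot J\ge |J|^2/K$. Consequently $m\ge |E||J|/K$, and $m=0$ forces $E=J=0$. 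On the set $\{E=0\}$ we therefore simply set $A=\Id$.

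On $\{E\neq 0\}$ I would take
$$A \equiv \frac{J\otimes J}{m} + \mu\, P^{T}P, \qquad P\equiv \Id - \frac{E\otimes J}{m},$$
with a constant $\mu>0$ to be chosen. This matrix is symmetric, and $PE = E - E\,(J\cdot E)/m=0$, so $AE = J(J\cdot E)/m = J$. It is a measurable function of $x$ because $E,J$ are measurable and $m>0$ on this set.

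For the upper bound, $(J\cdot\xi)^2/m\le (|J|^2/m)|\xi|^2\le K|\xi|^2$. Moreover $|P|\le 1+|E||J|/m\le 1+K$, so $\mu|P\xi|^2\le \mu(1+K)^2|\xi|^2$.

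For the lower bound, write $Q(\xi)=(J\cdot \xi)^2/m + \mu|\xi - E(J\cdot\xi)/m|^2$ and split into two cases.
\begin{itemize}
\item If $|J\cdot\xi|\,|E|/m\ge \theta|\xi|$, then $(J\cdot\xi)^2/m\ge \theta^2 (m/|E|^2)|\xi|^2\ge \theta^2|\xi|^2/K$.
\item Otherwise $|\xi - E(J\cdot\xi)/m|\ge (1-\theta)|\xi|$, so the second term is at least $\mu(1-\theta)^2|\xi|^2$.
\end{itemize}
Choosing $\mu\approx 1/K$ then gives two-sided bounds of order $K$.

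The main obstacle is getting exactly the constant $2K$ rather than $CK$. The crude splitting above loses absolute factors. I would instead compute the eigenvalues of $A$ exactly, since it acts nontrivially only on $\mathrm{span}\{E,J\}$ and equals $\mu$ on the orthogonal complement of $J$ and $E$ within the range of $P^T$. That reduces everything to a $2\times 2$ problem in the orthonormal frame spanned by $E/|E|$ and the component of $J$ orthogonal to $E$. There the determinant and trace can be expressed through $m/|E|^2\in[1/K,K]$ and $|J|^2/m\in[1/K,K]$, and I would optimize $\mu$; my first guess is $\mu=1/K$.

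If that optimization does not reach $2K$, the fallback is a different symmetric completion: replace $\mu P^TP$ by a multiple of the orthogonal projection onto $\{E,J\}^\perp$, plus the unique symmetric $2\times 2$ block on $\mathrm{span}\{E,J\}$ mapping $E\mapsto J$ with prescribed determinant. I would tune that determinant so that the two eigenvalues are balanced in $[1/(2K),2K]$, using again $m\ge \max(|E|^2,|J|^2)/K$.
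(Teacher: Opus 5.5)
Your construction is correct, and the first guess $\mu=1/K$ already gives exactly $2K$. As written, though, the proposal does not prove the stated constant. The estimates you actually carry out give only $A\le(2K+2+K^{-1})\Id$, because $|P|\le 1+K$ is lossy, and $A\ge\frac1{4K}\Id$ from the case split with $\theta=\frac12$. The decisive $2\times2$ computation is left as a plan with a contingency.

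That computation is two lines. Take $e_1=E/|E|$ and a unit vector $e_2\perp e_1$ with $J=\frac{m}{|E|}(e_1+te_2)$, $t\ge0$, and set $a\equiv m/|E|^2$. Then $Pe_1=0$ and $Pe_2=e_2-te_1$, so on $\mathrm{span}\{e_1,e_2\}$
\[
A=\begin{bmatrix} a & at\\ at & at^2+\mu(1+t^2)\end{bmatrix},
\]
while $A=\mu\Id$ on the orthogonal complement.

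The block has trace $(a+\mu)(1+t^2)$ and determinant $a\mu(1+t^2)$. Your two hypotheses read exactly $a\ge1/K$ and $a(1+t^2)=|J|^2/m\le K$, hence $1+t^2\le K^2$. With $\mu=1/K$:
\begin{itemize}
\item the largest eigenvalue is at most the trace, so at most $K+K=2K$;
\item the smallest eigenvalue is at least determinant over trace, $\frac{a\mu}{a+\mu}=\frac{a}{aK+1}\ge\frac1{2K}$, since $aK\ge1$;
\item the eigenvalue $1/K$ on the complement is harmless.
\end{itemize}
So no fallback is needed.

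You can also close it directly in your quadratic-form language. One has $|P|=|E||J|/m\le K$ exactly, which gives the upper bound $K+\mu K^2=2K$. For the lower bound, write $\xi=P\xi+\frac{J\cdot\xi}{m}E$; Cauchy--Schwarz then gives $|\xi|^2\le(\mu^{-1}+|E|^2/m)\,A\xi\cdot\xi\le2K\,A\xi\cdot\xi$.

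Compared with the paper: with $e=E/|E|$ and $z=Be-ae$ one has $z=at\,e_2$. The paper's matrix $a\Id+e\otimes z+z\otimes e+\frac2a z\otimes z$ is precisely your $A$ with the pointwise choice $\mu=a(x)$ instead of the constant $1/K$. Its block becomes $a\begin{bmatrix}1&t\\t&1+2t^2\end{bmatrix}$, with trace $2a(1+t^2)\le2K$ and smallest eigenvalue at least $a/2\ge1/(2K)$.

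Neither choice is better in the constant. The paper's makes $A/a$ depend only on the angle between $E$ and $J$, with $K$ entering only at the end. Your factorization $A=\frac{J\otimes J}{m}+\mu P^TP$ makes symmetry, $AE=J$ and positivity immediate.
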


\begin{proof}
At a point $x$ where \eqref{eq:ellB} holds and $\D u(x)\neq 0$, we set
$$A(x) \equiv  a(x) \Id + \left[e(x) \otimes z(x) + z(x) \otimes e(x)\right]+ \frac{2}{a(x)} z(x)\otimes z(x),$$
where we set
$$
e \equiv  \frac{\D u}{|\D u|},  \qquad a\equiv  Be\cdot e,\qquad z\equiv Be-a \,e. $$
We set $A(x) \equiv  K^{-1} \Id$ at points where $\D u(x)=0$, and also on the remaining null set.
Clearly $A$ is symmetric. Moreover $z\cdot e=0$ and, since $\D u = |\D u| e$, we have
$$A\, \D u=|\D u|(ae+z)=B\,\D u,$$
for a.e.\ $x$, and so $\ddiv(A\,\D u)=0$ weakly.

It remains to check the ellipticity condition. Fix a point $x$ where \eqref{eq:ellB} holds and $\D u(x)\neq 0$. From \eqref{eq:ellB} applied with $\xi=e$ and $\xi=B e$ we get
$$a\geq \frac1K, \qquad \frac{|Be|^2}{a}\leq K.$$
If $z=0$, then $A=a\Id$ and the desired bound follows from $K^{-1}\leq a\leq K$. Otherwise, in the plane spanned by $e$ and $\tfrac{z}{|z|}$, $A$ can be represented as 
$$\begin{bmatrix}
a & |z| \\ |z| & a + 2\frac{|z|^2}{a}
\end{bmatrix}
= a \begin{bmatrix}
1 & t \\ t & 1 + 2t^2
\end{bmatrix},
\qquad t\equiv \frac{|z|}{a},
$$
which has eigenvalues
$$a\left(1+t^2\pm t\sqrt{1+t^2}\right).$$
On the orthogonal complement of this 2-dimensional space, $A$ has a single eigenvalue $a$. Since
$$\frac 1 2 \leq 1+t^2-t\sqrt{1+t^2},\qquad
1+t^2+t\sqrt{1+t^2}\leq 2(1+t^2),$$
for any $\xi\in \R^n$ we obtain
$$\frac1{2K}|\xi|^2\leq A(x)\xi\cdot \xi\leq 2K|\xi|^2.$$
The same bound is immediate at points where $A=K^{-1}\Id$.
\end{proof}

We now proceed with the proof of the main result. We will not keep track of the dependence of constants on $n$, and so constants with the same notation may change from line to line.

\begin{proof}[Proof of Theorem \ref{thm:main}]
Write $x=r \omega$ with $\omega \in \mb S^{n-1}$, and let us decompose vectors into their normal and tangential parts:
$$\xi = \xi_r e_r + \xi_T, \qquad \xi_T \in T_\omega \mb S^{n-1}.$$
Let us fix $M$, to be chosen large depending only on $n$ later on, and define
$$V(\omega) \equiv  1-\omega_n^2, \qquad 
        Y_\gamma(\omega)\equiv -\gamma\, \D_T V(\omega),
        \qquad
        b_\gamma\equiv M\gamma^2,
$$
where $\D_T$ denotes the tangential derivative on $\mb S^{n-1}$. Define $B_\gamma$ in the frame
$(e_r,T_\omega\mb S^{n-1})$ by
\[
        B_\gamma(\omega)\equiv 
        \begin{bmatrix}
        b_\gamma & Y_\gamma(\omega)^T\\
        0 & \Id_{T_\omega\mb S^{n-1}}
        \end{bmatrix}
\]
so that
\begin{equation}
\label{eq:defB}
        B_\gamma(\xi_r,\xi_T)
        =
        \bigl(b_\gamma\xi_r+Y_\gamma\cdot\xi_T,\,\xi_T\bigr).
\end{equation}
Equivalently, we can define $B_\gamma$ in coordinate-free notation as
$$        B_\gamma(\omega)=  \omega\otimes (b_\gamma \omega+Y_\gamma(\omega)) + (\Id-\,\omega\otimes \omega).$$
Note that $B_\gamma$ is a positively 0-homogeneous matrix field which is smooth away from the origin.
We now divide the rest of the proof into several steps.

\medskip
\textbf{Step 1: calculation of the ellipticity constant.}
Since $|Y_\gamma|\le C_n\gamma$, we have
$$|\xi_r Y_\gamma\cdot \xi_T|\leq \frac 1 2 b_\gamma \xi^2_r + \frac{|Y_\gamma|^2}{2 b_\gamma}|\xi_T|^2
\leq \frac 1 2 b_\gamma \xi^2_r + \frac{C_n}{M}|\xi_T|^2$$
and therefore, choosing $M$ sufficiently large so that $C_n/M\leq \frac 1 2$, we have
\begin{align*}
        B_\gamma\xi\cdot \xi
        &=b_\gamma\xi_r^2+|\xi_T|^2+
          \xi_rY_\gamma\cdot\xi_T  \ge \frac 1 2 \bigl(b_\gamma\xi_r^2+|\xi_T|^2\bigr)
        \ge \frac 1 2 |\xi|^2
\end{align*}
since $\gamma\geq 1$.
Moreover
\[
        B_\gamma^{-1}=
        \begin{bmatrix}
        b_\gamma^{-1} & -b_\gamma^{-1}Y_\gamma^T\\
        0 & \Id_{T_\omega \mb S^{n-1}}
        \end{bmatrix},
\]
and, just as above, we have
$$|b_\gamma^{-1}\xi_r Y_\gamma\cdot \xi_T|
\leq \frac 1 2 b_\gamma^{-1}\xi_r^2 + \frac{|Y_\gamma|^2}{2 b_\gamma}|\xi_T|^2
\leq \frac 1 2 b_\gamma^{-1}\xi_r^2 + \frac{C_n}{M}|\xi_T|^2.$$
Thus, again as above, and by the definition of $b_\gamma$, we have
\[
        B_\gamma^{-1}\xi\cdot \xi
        \ge
        \frac 1 2 \bigl(b_\gamma^{-1}\xi_r^2+|\xi_T|^2\bigr)
        \ge \frac {1}{c_n\gamma^2}|\xi|^2 .
\]
Thus, for $\widehat B_\gamma\equiv \gamma^{-1}B_\gamma$,
\[
        \widehat B_\gamma\xi\cdot\xi\ge \frac{1}{2\gamma}|\xi|^2,
        \qquad
        \widehat B_\gamma^{-1}\xi\cdot\xi\ge \frac{1}{c_n\gamma}|\xi|^2.
\]
That is, $\widehat B_\gamma$ satisfies \eqref{eq:ellB} with $K=c_n\gamma$.

\medskip
\textbf{Step 2: reduction to an eigenvalue problem.}
We now look for positively homogeneous solutions
\[
        u(r,\omega)=r^\alpha\phi(\omega)
\]
of $\ddiv(B_\gamma \D u)=0$, where $\phi$ is smooth.
We compute
\begin{equation}
\label{eq:gradu}
        \D u
        =r^{\alpha-1}
        \bigl[\alpha\phi\,e_r+\D_T\phi\bigr],
        \qquad 
                B_\gamma\,\D u
        =r^{\alpha-1}
        \bigl[\left(b_\gamma\alpha\phi+Y_\gamma\cdot\D_T\phi\right) e_r + \D_T\phi\bigr].
\end{equation}
Recalling that, if $X=X_r e_r+X_T$, with $X_T(r,\omega)\in T_\omega \mb S^{n-1}$, is a vector field, then
$$
        \ddiv X
        =r^{-(n-1)}\partial_r(r^{n-1}X_r)
        +r^{-1}\ddiv_T X_T,$$
where $\ddiv_T$ is the tangential divergence on $\mb S^{n-1}$,
we obtain
\[
        \ddiv(B_\gamma\D u)
        =r^{\alpha-2}
        \left[
        \Delta_T \phi
        +(\alpha+n-2)Y_\gamma\cdot\D_T\phi
        +b_\gamma\alpha(\alpha+n-2)\phi
        \right],
\]
where $\Delta_T=\ddiv_T \D_T$ is the usual Laplace--Beltrami operator on $\mb S^{n-1}$.
Because $Y_\gamma=-\gamma\,\D_T V$, the equation $\ddiv(B_\gamma \D u)=0$ is equivalent, away from the origin, to
\begin{equation}
\label{eq:PDEreduced}
        -\left(
        \Delta_T-(\alpha+n-2)\gamma\,\D_T V\cdot\D_T
        \right)\phi
        =b_\gamma\alpha(\alpha+n-2)\phi .
\end{equation}
For $t\geq 1$, we define the self-adjoint operator
\[
        \mc L_t\phi
        \equiv -\Delta_T\phi+t\,\D_T V\cdot\D_T\phi
        =-e^{tV}\ddiv_T(e^{-tV}\D_T\phi)
\]
 in $L^2(\Sn,e^{-tV}\d\sigma)$,
where $\sigma$ is the surface measure on $\Sn$. The operator $\mc L_t$ is sometimes known as the \textit{weighted Laplacian} or \textit{Bakry--\'Emery Laplacian}. 
%for some of its basic spectral properties.

\medskip
\textbf{Step 3: continuity of the first eigenvalue.}
Let $\mu_1(t)>0$ be the first positive eigenvalue of $\mc L_t$, characterized through the usual Rayleigh quotient:
\begin{equation}
\label{eq:rayleigh}
        \mu_1(t)= 
        \inf\left\{
        \frac{\int_{\Sn}|\D_T\psi|^2 e^{-tV}\d\sigma}
             {\int_{\Sn}\psi^2 e^{-tV}\d\sigma}
        : \psi\in W^{1,2}(\Sn),\psi\neq 0, \int_{\Sn}\psi e^{-tV}\d\sigma=0
        \right\}.
\end{equation}
Note that, for fixed $t$, the weight $w_t\equiv e^{-tV}$ is comparable to 1, hence the weighted Sobolev space is equivalent to the usual one. We claim that $t\mapsto \mu_1(t)$ is continuous on $[1,\infty)$. Indeed, since averages minimize the $L^2$-distance to constants, the Rayleigh quotient can be written as
\[
        \mu_1(t)=
        \inf_{\psi\neq \mathrm{const}}
        \frac{\int_{\Sn}|\D_T\psi|^2 w_t\,\d\sigma}
             {\min_{c\in \R}\int_{\Sn}|\psi-c|^2 w_t\,\d\sigma}.
\]
Fix $s,t\geq 1$ and set $\delta=|s-t|$. Since $0\leq V\leq 1$, we have
\[
        e^{-\delta}w_t\leq w_s\leq e^\delta w_t.
\]
Therefore, for every non-constant $\psi$,
\[
        e^{-\delta}\int_{\Sn}|\D_T\psi|^2 w_t\,\d\sigma
        \leq
        \int_{\Sn}|\D_T\psi|^2 w_s\,\d\sigma
        \leq
        e^\delta\int_{\Sn}|\D_T\psi|^2 w_t\,\d\sigma
\]
and also
\[
        e^{-\delta}\min_{c\in \R}\int_{\Sn}|\psi-c|^2 w_t\,\d\sigma
        \leq
        \min_{c\in \R}\int_{\Sn}|\psi-c|^2 w_s\,\d\sigma
        \leq
        e^\delta\min_{c\in \R}\int_{\Sn}|\psi-c|^2 w_t\,\d\sigma .
\]
Taking the infimum over $\psi$ gives
\[
        e^{-2\delta}\mu_1(t)\leq \mu_1(s)\leq e^{2\delta}\mu_1(t),
\]
which proves the claim.

\medskip
\textbf{Step 4: bounds on the first eigenvalue.} We claim that, for $t\geq 1$ and some $c>0$,
\begin{equation}
\label{eq:boundmu}
        \mu_1(t)\le C_n e^{-c t}.
\end{equation}
To see this, choose $\eta\in C^\infty([-1,1])$ odd, with
\[
        \eta(s)=1\quad\text{for }s\ge\frac23,
        \qquad
        \eta(s)=-1\quad\text{for }s\le-\frac23,
        \qquad
        |\eta'|\le 4.
\]
Let $\psi(\omega)=\eta(\omega_n)$. Since $V(\omega)=1-\omega_n^2$ is even in $\omega_n$ and $\psi$ is odd,
\[
        \int_{\Sn}\psi e^{-tV}\d\sigma=0.
\]
The integrand in the numerator of \eqref{eq:rayleigh} is supported where $|\omega_n|\le 2/3$, and there we have $V\ge 5/9$. Thus
\begin{equation}
\label{eq:numerator}
        \int_{\Sn}|\D_T\psi|^2e^{-tV}\d\sigma
        \le C_n e^{-5t/9}.
\end{equation}
For the denominator in \eqref{eq:rayleigh}, we restrict to the cap where
\[
        \omega_n\ge \max\{\tfrac 2 3,\sqrt{1-t^{-1}}\}.
\]
The second condition is equivalent to $V\leq \frac 1 t$ and guarantees that the weight is at least $e^{-1}$ in the cap; also
$\psi=1$ in the cap. As the cap has surface area comparable to $t^{-(n-1)/2}$, we have
\begin{equation}
\label{eq:denominator}
        \int_{\Sn}\psi^2e^{-tV}\d\sigma
        \ge c_n t^{-(n-1)/2}.
\end{equation}
Combining \eqref{eq:numerator}--\eqref{eq:denominator}, and absorbing the polynomial bound into the exponential, by choosing any $c<\frac 5 9$ we obtain  \eqref{eq:boundmu}.

\medskip
\textbf{Step 5: solving the eigenvalue problem.}
This is where we use the condition $n\geq 3$.
For $\alpha\geq 0$, let
\[
        F_\gamma(\alpha)\equiv 
        b_\gamma\alpha(\alpha+n-2)-\mu_1((\alpha+n-2)\gamma).
\]
Since $n\geq 3$ and $\gamma\geq 1$, the argument of $\mu_1$ is at least $1$ for every $\alpha\geq 0$,  and we also have
\[
        F_\gamma(0)=-\mu_1((n-2)\gamma)<0.
\]
Also, by \eqref{eq:boundmu}, we have $\mu_1((\alpha+n-2)\gamma)\le C_n$ for all $\alpha\geq 0$.
Hence, choosing $N=N_n$ large,
\[
        F_\gamma(Nb_\gamma^{-1})
        \ge N(n-2)-C_n>0,
\]
again since $n\geq 3$.
Thus, by continuity of $F_\gamma$, which follows from Step 3, the Intermediate Value Theorem guarantees the existence of $\alpha_\gamma\in (0,Nb_\gamma^{-1})$ with $F_\gamma(\alpha_\gamma)=0$, i.e.\
\begin{equation}
\label{eq:eigenvalueidentity}
        b_\gamma\alpha_\gamma(\alpha_\gamma+n-2)=\mu_1((\alpha_\gamma+n-2)\gamma).
\end{equation}
By \eqref{eq:boundmu}, for $M$ large enough depending only on $n$ we have
\begin{equation}
\label{eq:boundalpha}
        \alpha_\gamma
        \le C_n b_\gamma^{-1}\mu_1((\alpha_\gamma+n-2)\gamma)
        \le \frac{C_n}{M\gamma^2} e^{-c\gamma} 
        \le e^{-c_n \gamma}.
\end{equation}

\medskip
\textbf{Step 6: conclusion.}
Let $\phi_\gamma\neq 0$ be a corresponding first eigenfunction, which is smooth on $\mb S^{n-1}$, and set
\[
        u_\gamma(r,\omega)\equiv r^{\alpha_\gamma}\phi_\gamma(\omega).
\]
Then $u_\gamma\in W^{1,2}(\mb B^n)$, since, recalling \eqref{eq:gradu}, we have
\[
        \int_{\mb B^n}|\D u_\gamma|^2
        =
        \left(\int_{\Sn}\alpha_\gamma^2\phi_\gamma^2+|\D_T\phi_\gamma|^2\d\sigma\right)
        \int_0^1 r^{2\alpha_\gamma+n-3}\d r<\infty.
\]
By Step 2 and equations \eqref{eq:PDEreduced} and \eqref{eq:eigenvalueidentity} we see that $u_\gamma$ solves $\ddiv(B_\gamma\D u_\gamma)=0$ classically in $\mb B^n\setminus\{0\}$. To check that $u_\gamma$ is a weak solution across the origin, fix $\zeta\in C_c^\infty(\mb B^n)$ and integrate by parts in $\mb B^n \setminus \e \mb B^n$:
$$
 \left|       \int_{\mb B^n\setminus \e \mb B^n} B_\gamma\D u_\gamma\cdot \D\zeta\right|=
        \left|\int_{\e \mb S^{n-1}}\zeta\, B_\gamma\D u_\gamma\cdot e_r\, \d\sigma \right|\leq  C_{\gamma,n} \|\zeta\|_{L^\infty}\e^{n+\alpha_\gamma-2} \to 0 \quad \text{ as } \e\to 0,
$$
since $ B_\gamma$ is bounded and $|\D u_\gamma(x)|\leq C_{n,\gamma} r^{\alpha_\gamma-1}.$
Thus $u_\gamma$ is a weak solution in the whole $\mb B^n$.

Applying Lemma \ref{lemma:sym} to $\widehat B_\gamma = \gamma^{-1}B_\gamma$ and $u_\gamma$, by Step 1 we obtain a symmetric coefficient $A_\gamma$ satisfying \eqref{eq:ellmain} with $K_\gamma= C_n \gamma$, and such that $u_\gamma$ solves \eqref{eq:PDEmain}. The conclusion now follows from \eqref{eq:boundalpha}.
\end{proof}

\appendix

\section{Proof of Theorem \ref{thm:GPT}}

In this section we explain how a non-trivial modification of the construction in Section \ref{sec:proof} leads to the proof of Theorem \ref{thm:GPT}. This modification was discovered essentially autonomously by ChatGPT-5.6 Pro, but the argument below has been completely rewritten and checked by the author.

\begin{proof}[Proof of Theorem \ref{thm:GPT}]
As before, we split the proof into several steps.

\medskip
\textbf{Step 1: ansatz.}
Let us take 
$$\phi_\gamma(\omega_n)\equiv \frac{\tanh(\gamma \omega_n)}{\tanh \gamma}, \qquad -1\leq \omega_n\leq 1,$$
thus $\phi_\gamma(\pm 1)=\pm 1$ and $\phi_\gamma(\omega_n)\to \sgn(\omega_n)$ as $\gamma\to \infty$, locally uniformly away from $\omega_n=0$.
Writing $x=r \omega$ as before, and with $s\equiv \log \frac 1 r $, we look for a solution of the form
\begin{equation}
\label{eq:ansatz2}
u(r\omega) \equiv U\bigg( \log \frac 1 r, \omega\bigg), \qquad
U(s,\omega)\equiv a(s)\phi_{\gamma(s)}(\omega_n), \qquad
\gamma(s) \equiv \log(S+s),
\end{equation}
for a large constant $S$ and a radial profile $a$ to be determined.
We take an operator of the form
\begin{equation}
\label{eq:defB2}
B(s,\omega) (\xi_r, \xi_T) = (M \gamma(s)^2 \xi_r + Y(s,\omega)\cdot \xi_T, \xi_T),
\end{equation}
exactly as in \eqref{eq:defB}, with $Y(s,\omega)$ tangent to the sphere, and $M$ very large to be chosen. Since
$$\D u(r \omega) = \frac 1 r (-U_s(s,\omega)\omega + \D_T U(s,\omega)), $$
we would like to write, exactly as in \eqref{eq:gradu},
\begin{equation}
\label{eq:desirableB}
B\,\D u = \frac 1 r (q(s,\omega) \omega + \D_T U(s,\omega)),
\end{equation}
for some scalar function $q$. Comparing \eqref{eq:defB2} and \eqref{eq:desirableB}, we see that we must have
\begin{equation}
\label{eq:eqY}
Y \cdot \D_T U = q + M\gamma^2 U_s.
\end{equation}
Note that we have
\begin{equation}
\label{eq:U-derivatives}
\D_TU=a(s)\phi'_{\gamma(s)}(\omega_n)\D_T\omega_n,
\qquad
U_s=a'(s)\phi_{\gamma(s)}(\omega_n)
+a(s)\gamma'(s)\partial_\gamma\phi_{\gamma(s)}(\omega_n),
\end{equation}
so $\D_TU$ vanishes at the poles. We choose $Y$ parallel
to $\D_T\omega_n$, and then \eqref{eq:eqY} forces
\begin{equation}
\label{eq:defY}
Y (s,\omega) \equiv \frac{q(s,\omega_n) + M \gamma(s)^2 U_s(s,\omega_n)}{ a(s) \phi'_{\gamma(s)}(\omega_n) (1-\omega_n^2)} \D_T \omega_n,
\end{equation}
since $|\D_T \omega_n|^2 = 1-\omega_n^2$.
In turn, by \eqref{eq:desirableB},
\begin{equation}
\label{eq:ODEq}
\ddiv(B\,\D u)=\frac1{r^2}\left(cq-q_s+\Delta_TU\right),
\qquad c\equiv n-2.
\end{equation}
Thus the PDE is equivalent to $q_s-cq=\Delta_TU$. This equation determines
$q$ from $a$, which we are yet to prescribe. Note that the homogeneous solution to this ODE is the fundamental solution
$$q_\tp{hom}(s) = e^{cs} = r^{-(n-2)}.$$
Hence, in order to avoid a Dirac mass at the origin, we require $e^{-c s } q(s,\cdot)\to 0$ as $s\to \infty$, which leads us to defining, since $\Delta_T U(s,\omega_n) = a(s) \Delta_T \phi_{\gamma(s)}(\omega_n)$, 
\begin{equation}
\label{eq:q2}
q(s,\omega_n) \equiv -\int_s^\infty e^{-c(t-s)} a(t) \Delta_T \phi_{\gamma(t)}(\omega_n) \,\d t.
\end{equation}

\medskip
\textbf{Step 2: finding the radial profile $a$.}
To compute $q(s,1)$, note that, for an axisymmetric function $f=f(\omega_n)$, we have
\begin{equation}
\label{eq:axisymmetric-laplacian}
\Delta_T f = (1-\omega_n^2) f''-(n-1) \omega_n f',
\end{equation}
by the chain rule, since
$|\D_T\omega_n|^2=1-\omega_n^2$ and
$\Delta_T\omega_n=-(n-1)\omega_n$. Thus\footnote{The quantity $\mu_\gamma$ should not be confused with the first positive eigenvalue
$\mu_1(t)$ used in Section \ref{sec:proof}. Here, $\mu_\gamma$ is not an
eigenvalue: it is the explicit value of $-\Delta_T\phi_\gamma$ at the poles. However, the two quantities play
analogous roles (hence the similar notation), since both are exponentially small angular quantities
that determine the radial profile in the construction.}
\begin{equation}
\label{eq:defmu}
\Delta_T \phi_\gamma(1) = -\mu_\gamma, \qquad \mu_\gamma \equiv (n-1)\phi'_\gamma(1) = \frac{2(n-1)\gamma}{\sinh(2 \gamma)}.
\end{equation}
It follows from \eqref{eq:q2} and \eqref{eq:defmu} that
\begin{equation}
\label{eq:q-pole}
q(s,1)=\int_s^\infty e^{-c(t-s)}a(t)\mu_{\gamma(t)}\,\d t>0.
\end{equation}
By \eqref{eq:U-derivatives} we have $\D_T U(s,1)=0$, since $\D_T\omega_n|_{\omega_n=1}=0$,  and
$U_s(s,1)=a'(s)$, since $\phi_\gamma(1)=1$ and $\p_\gamma \phi_\gamma(1)=0$. Hence the right-hand side in \eqref{eq:eqY} must
vanish at $\omega_n=1$, and we obtain
\begin{equation}
\label{eq:ODEa}
M \gamma(s)^2 a'(s) = -q(s,1).
\end{equation}
In order to have a discontinuity at the origin we want the radial profile to not vanish there, so we set $a(\infty)=1$. Therefore, integrating \eqref{eq:ODEa} from $s$ to $\infty$ and using \eqref{eq:q-pole}, we have
\begin{equation}
\label{eq:fixedpointa}
a(s) =1 + \frac 1 M \int_s^\infty \frac{\d t}{\gamma(t)^2} \int_t^\infty e^{-c(\tau-t)} a(\tau) \mu_{\gamma(\tau)} \d \tau\equiv 1 + (Ta)(s).
\end{equation}
We solve \eqref{eq:fixedpointa} in $L^\infty([0,\infty))$. Since
 $\gamma\mapsto\mu_\gamma$ is decreasing, for every
$f\in L^\infty([0,\infty))$ we have
\begin{equation}
\label{eq:T-bound}
\begin{aligned}
\|Tf\|_{L^\infty}
&\leq \frac{\|f\|_{L^\infty}}{Mc}
\int_0^\infty \frac{\mu_{\gamma(t)}}{\gamma(t)^2}\,\d t\leq \frac{C_n\|f\|_{L^\infty}}{M}
\int_0^\infty \frac{\d t}{(S+t)^2\log S}
= \frac{C_n\|f\|_{L^\infty}}{MS\log S}.
\end{aligned}
\end{equation}
Here, in the second inequality, we used the elementary bounds 
\begin{equation}
\label{eq:boundmu2}
\frac{\mu_{\gamma(t)}}{\gamma(t)^2}
\leq \frac{C_ne^{-2\gamma(t)}}{\gamma(t)}
= \frac{C_n}{(S+t)^2\log(S+t)}
\leq \frac{C_n}{(S+t)^2\log S}.
\end{equation}
Thus, for a fixed
$M$, we can choose $S=S(M)$ sufficiently large so that
$\|T\|\leq\frac12$. It follows that
\[
a=(\Id-T)^{-1}1=\sum_{j=0}^\infty T^j1
\]
is the unique bounded solution of \eqref{eq:fixedpointa}. Since $T$ is positive, $a\geq1$, while
$\|a\|_{L^\infty}\leq \frac{1}{1-\|T\|}\leq2.$
Similarly to \eqref{eq:T-bound}, using $a-1=Ta$ and retaining the lower
integration limit $s$, we obtain
\[
0\leq a(s)-1=(Ta)(s)
\leq \frac{2}{Mc}\int_s^\infty
\frac{\mu_{\gamma(t)}}{\gamma(t)^2}\,\d t
\to0
\qquad\text{as }s\to\infty.
\]
Thus $a$ is a bounded, $C^1$ solution of  \eqref{eq:ODEa} with
\begin{equation}
\label{eq:propsa}
1\leq a\leq 2, \qquad a'<0,\qquad \lim_{s\to \infty} a=1,
\end{equation}
where $a'<0$ follows from \eqref{eq:ODEa} and \eqref{eq:q-pole}.

\medskip
\textbf{Step 3: bounds on $Y$.}
We now state an elementary lemma, which we prove at the end of the appendix.
\begin{lemma}\label{lem:angular}
There is a constant $C_n$ such that, whenever $\eta\geq\gamma\geq2$ and
$\omega_n\in[-1,1]$,
\begin{align}
  |\partial_\gamma\phi_\gamma(\omega_n)|
  &\leq C_n(1-\omega_n^2)\phi_\gamma'(\omega_n),\label{eq:angular-1}\\
  |-\Delta_T\phi_\gamma(\omega_n)-\mu_\gamma\phi_\gamma(\omega_n)|
  &\leq C_n\gamma(1-\omega_n^2)\phi_\gamma'(\omega_n),\label{eq:angular-2}\\
  \phi_\eta'(\omega_n)&\leq\frac\eta\gamma\phi_\gamma'(\omega_n),
  \label{eq:angular-3}\\
  |\phi_\eta(\omega_n)-\phi_\gamma(\omega_n)|
  &\leq C_n\frac{\eta^2-\gamma^2}{\gamma}
  (1-\omega_n^2)\phi_\gamma'(\omega_n).\label{eq:angular-4}
\end{align}
\end{lemma}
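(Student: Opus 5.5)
We will reduce all four inequalities to explicit one-variable computations with $\phi_\gamma(x)=\tanh(\gamma x)/\tanh\gamma$, writing $x=\omega_n$. The basic quantities are
\[
\phi_\gamma'(x)=\frac{\gamma}{\tanh\gamma\,\cosh^2(\gamma x)},\qquad \phi_\gamma''(x)=-2\gamma\tanh(\gamma x)\,\phi_\gamma'(x).
\]
Since $\gamma\geq2$, $\tanh\gamma\in[\tanh 2,1]$ is harmless. The recurring comparison will be $\phi_\gamma'(x)\approx\gamma e^{-2\gamma|x|}$ for $|x|\geq 1/\gamma$, with the $\tanh(\gamma x)$ factors of the lemma accounting for the behaviour near $x=0$. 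By oddness in $x$ (all four quantities are odd or even consistently), we restrict to $x\in[0,1]$. Throughout, $1-x^2$ is comparable to $1-x$ on $[0,1]$.

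For \eqref{eq:angular-1}, differentiate:
\[
\partial_\gamma\phi_\gamma=\frac{x}{\tanh\gamma\cosh^2(\gamma x)}-\frac{\tanh(\gamma x)}{\sinh^2\gamma}.
\]
Both terms vanish at $x=1$; indeed $\partial_\gamma\phi_\gamma(1)=0$. Dividing by $\phi_\gamma'$, we must bound
\[
\frac{x}{\gamma}-\frac{\tanh(\gamma x)\cosh^2(\gamma x)\tanh\gamma}{\gamma\sinh^2\gamma}
\]
by $C(1-x)$. We split into $1-x\geq 1/\gamma$, where each term is $O(1/\gamma)$ (the second is $\le e^{2\gamma(x-1)}/\gamma$), and $1-x\le1/\gamma$, where we Taylor expand in $1-x$ using the vanishing at $x=1$ together with a derivative bound of order $1$. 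Estimate \eqref{eq:angular-3} is the pointwise inequality
\[
\frac{\eta\cosh^2(\gamma x)}{\tanh\eta\,\cosh^2(\eta x)}\leq\frac{\eta}{\tanh\gamma}.
\]
This holds because $\cosh(\gamma x)\le\cosh(\eta x)$ and $\tanh\eta\ge\tanh\gamma$. Estimate \eqref{eq:angular-4} then follows from \eqref{eq:angular-1} and \eqref{eq:angular-3} by integrating in the parameter:
\[
|\phi_\eta-\phi_\gamma|\le\int_\gamma^\eta C(1-x^2)\,\phi_\theta'\,\d\theta\le C(1-x^2)\,\phi_\gamma'\int_\gamma^\eta\frac{\theta}{\gamma}\,\d\theta,
\]
and the last integral is $(\eta^2-\gamma^2)/(2\gamma)$.

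The main work is \eqref{eq:angular-2}. Using \eqref{eq:axisymmetric-laplacian} and the formula for $\phi''$,
\[
-\Delta_T\phi_\gamma=\phi_\gamma'\bigl(2\gamma(1-x^2)\tanh(\gamma x)+(n-1)x\bigr),
\]
while $\mu_\gamma\phi_\gamma=(n-1)\phi_\gamma'(1)\phi_\gamma$. The first term is already of the allowed size $C\gamma(1-x^2)\phi_\gamma'$. It therefore suffices to prove
\[
(n-1)\bigl|x\,\phi_\gamma'(x)-\phi_\gamma'(1)\phi_\gamma(x)\bigr|\le C\gamma(1-x)\,\phi_\gamma'(x).
\]
At $x=1$ both sides vanish, so we compare their ratio. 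We have
\[
\frac{\phi_\gamma'(1)\,\phi_\gamma(x)}{\phi_\gamma'(x)}=\frac{\cosh^2(\gamma x)\tanh(\gamma x)}{\cosh^2\gamma}=\frac{\sinh(2\gamma x)}{2\cosh^2\gamma}.
\]
So we need
\[
\Bigl|x-\frac{\sinh(2\gamma x)}{2\cosh^2\gamma}\Bigr|\le C\gamma(1-x).
\]
For $1-x\ge 1/(2\gamma)$ the left side is at most $2$, which is $\le 4\gamma(1-x)$. For $1-x\le1/(2\gamma)$, the function $g(x)=x-\sinh(2\gamma x)/(2\cosh^2\gamma)$ satisfies $g(1)=1-\tanh\gamma=O(e^{-2\gamma})$. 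Its derivative is $|g'|\le 1+2\gamma\cosh(2\gamma x)/(2\cosh^2\gamma)\le C\gamma$, so the mean value theorem gives $|g(x)|\le C\gamma(1-x)+O(e^{-2\gamma})$. The leftover $e^{-2\gamma}$ is the one delicate point: because $g(1)\neq0$, a constant error of that size is not controlled by $\gamma(1-x)$ as $x\to1$, and I expect handling this near-pole term to be the main obstacle. On inspection the bracket vanishes exactly at $x=1$ only after using $\phi_\gamma(1)=1$, $\phi_\gamma'(1)\cdot1=\phi_\gamma'(1)$. So the right quantity to expand is $x\phi'(x)-\phi'(1)\phi(x)$ itself, which does vanish at $x=1$, with derivative bounded by $C\gamma\phi'(x)$ for $x$ within $1/\gamma$ of $1$; this is because $\phi''=-2\gamma\tanh\phi'$ and $\phi'$ varies by a bounded factor on that interval. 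This derivative bound is the step I would verify most carefully.
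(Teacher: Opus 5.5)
Your proposal is correct and follows the paper's proof essentially step for step. It uses the same explicit formulas, proves \eqref{eq:angular-3} from the monotonicity of $\cosh$ and $\tanh$, proves \eqref{eq:angular-4} by integrating \eqref{eq:angular-1} in the parameter against \eqref{eq:angular-3}, and reduces \eqref{eq:angular-1} and \eqref{eq:angular-2} to the bracket $x-\sinh(2\gamma x)/\sinh(2\gamma)$. Your unsimplified quotient in \eqref{eq:angular-1} is exactly $\gamma^{-1}$ times this bracket.

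The ``delicate'' near-pole term in \eqref{eq:angular-2} is only an algebra slip: you dropped the factor $1/\tanh\gamma$ in $\phi_\gamma$. In fact $\phi_\gamma'(1)\phi_\gamma(x)/\phi_\gamma'(x)=\sinh(2\gamma x)/(2\cosh^2\gamma\tanh\gamma)=\sinh(2\gamma x)/\sinh(2\gamma)$, so $g(1)=0$. Your mean value bound $|g'|\le 1+2\gamma\coth(2\gamma)\le C\gamma$ then closes the argument on all of $[0,1]$, with no splitting needed. Your fallback via $x\phi_\gamma'-\phi_\gamma'(1)\phi_\gamma$ also works, since $\phi_\gamma'$ is decreasing on $[0,1]$.
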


A crucial point in the construction is to prove that $|Y|\lesssim\gamma$, which is the purpose of this step, and we use the lemma to estimate the numerator in its definition \eqref{eq:defY}. From
\eqref{eq:ODEa} and \eqref{eq:U-derivatives},
\begin{equation}
\label{eq:numerator-decomposition}
q(s,\omega_n)+M\gamma(s)^2U_s(s,\omega_n)
=q(s,\omega_n)-q(s,1)\phi_{\gamma(s)}(\omega_n)
+M\gamma(s)^2a(s)\gamma'(s)
\partial_\gamma\phi_{\gamma(s)}(\omega_n).
\end{equation}
For the first term on the right-hand side, \eqref{eq:q2} and \eqref{eq:q-pole} give
\begin{equation}
\begin{split}
q(s,\omega_n)-q(s,1)\phi_{\gamma(s)}(\omega_n)
={}\int_s^\infty e^{-c(t-s)}a(t)
\bigl[& -\Delta_T\phi_{\gamma(t)}(\omega_n)
-\mu_{\gamma(t)}\phi_{\gamma(t)}(\omega_n)\\
&\quad+\mu_{\gamma(t)}
\bigl(\phi_{\gamma(t)}(\omega_n)
-\phi_{\gamma(s)}(\omega_n)\bigr)\bigr]\,\d t.
\label{eq:q-difference2}
\end{split}
\end{equation}
We now estimate the right-hand side.
By \eqref{eq:ansatz2}, $0<\gamma'(\tau)=1/(S+\tau)\leq1$. Thus, for
$t\geq s$, writing $h=t-s$, we have
\begin{equation}
\label{eq:elementaryboundgamma}
\gamma(t)\leq\gamma(s)+h,
\qquad
\frac{\gamma(t)^2-\gamma(s)^2}{\gamma(s)}
\leq 2h+\frac{h^2}{\gamma(s)}.
\end{equation}
Now \eqref{eq:angular-2} and \eqref{eq:angular-3} give
\[
\left|-\Delta_T\phi_{\gamma(t)}(\omega_n)
-\mu_{\gamma(t)}\phi_{\gamma(t)}(\omega_n)\right|
\leq C_n\frac{\gamma(t)^2}{\gamma(s)}
(1-\omega_n^2)\phi_{\gamma(s)}'(\omega_n),
\]
while \eqref{eq:angular-4} and $\mu_{\gamma(t)}\leq C_n$ give
\[
\mu_{\gamma(t)}
\left|\phi_{\gamma(t)}(\omega_n)
-\phi_{\gamma(s)}(\omega_n)\right|
\leq C_n\frac{\gamma(t)^2-\gamma(s)^2}{\gamma(s)}
(1-\omega_n^2)\phi_{\gamma(s)}'(\omega_n).
\]
Inserting these estimates into \eqref{eq:q-difference2} and using
$1\leq a\leq2$ and \eqref{eq:elementaryboundgamma} yields
\begin{equation}
\begin{split}
\label{eq:q-difference-bound2}
\left|q(s,\omega_n)-q(s,1)\phi_{\gamma(s)}(\omega_n)\right|
& \leq C_n(1-\omega_n^2)\phi_{\gamma(s)}'(\omega_n)
\int_0^\infty e^{-ch}
\left(\frac{(\gamma(s)+h)^2}{\gamma(s)}
+2h+\frac{h^2}{\gamma(s)}\right)\,\d h\\
&\leq C_n\gamma(s)(1-\omega_n^2)
\phi_{\gamma(s)}'(\omega_n),
\end{split}
\end{equation}
 since $c=n-2>0$ and $\gamma(s)\geq2$.

We now fix $M$, depending only on $n$, sufficiently
large relative to the dimensional constants above, and then choose $S$
sufficiently large so that the requirement in Step 2 holds and
\begin{equation}
\label{eq:gamma-smallness2}
M\gamma(s)e^{-\gamma(s)}\leq1
\qquad\text{for every }s\geq0.
\end{equation}
Since $\gamma'=e^{-\gamma}$ and $a\leq 2$, \eqref{eq:angular-1} and
\eqref{eq:gamma-smallness2} give
\[
M\gamma(s)^2a(s)\gamma'(s)
\left|\partial_\gamma\phi_{\gamma(s)}(\omega_n)\right|
\leq C_n\gamma(s)(1-\omega_n^2)
\phi_{\gamma(s)}'(\omega_n).
\]
Combining this with \eqref{eq:numerator-decomposition} and
\eqref{eq:q-difference-bound2} yields
\begin{equation}
\label{eq:q-Us-bound2}
|q(s,\omega_n)+M\gamma(s)^2U_s(s,\omega_n)|
\leq C_n\gamma(s)(1-\omega_n^2)
\phi_{\gamma(s)}'(\omega_n)
\leq C_n\gamma(s)^2.
\end{equation}
Since $|\D_T\omega_n|=\sqrt{1-\omega_n^2}$, the definition
\eqref{eq:defY} and $a\geq1$ imply
\begin{equation}
\label{eq:Y-bound2}
|Y(s,\omega)|\leq C_n\gamma(s)\sqrt{1-\omega_n^2}
\leq C_n\gamma(s).
\end{equation}
In particular, $Y(s,\cdot)$ extends continuously to the poles.
We note that, for much simpler reasons, the same bound holds for the tangential field $Y$ constructed in Section \ref{sec:proof}.

\medskip
\textbf{Step 4: ellipticity.} The bound \eqref{eq:Y-bound2} makes the
off-diagonal term in \eqref{eq:defB2} negligible compared with
$M\gamma^2$. Indeed, choosing the constant $M=M(n)$ sufficiently
large, \eqref{eq:Y-bound2} gives
$|Y|^2/(M\gamma^2)\leq\frac12$. Young's inequality then gives,
exactly as in Step~1 of Section~\ref{sec:proof},
\begin{equation}
\label{eq:B-lower2}
B\xi\cdot\xi
\geq\frac12\left(M\gamma^2\xi_r^2+|\xi_T|^2\right)
\geq\frac12|\xi|^2,
\qquad
B^{-1}\xi\cdot\xi
\geq\frac1{2M\gamma^2}\xi_r^2+\frac12|\xi_T|^2
\geq\frac1{2M\gamma^2}|\xi|^2.
\end{equation}
Therefore, by applying exactly the same pointwise construction used in the proof of Lemma \ref{lemma:sym}, we find a measurable symmetric matrix
field $A$ such that $A\,\D u=4B\,\D u$ and, after increasing $S$ if
necessary, $A$ satisfies \eqref{eq:elldeg} with
\[
\Lambda(x)=16M\gamma(s)^2.
\]
%Since
%$\d x=e^{-ns}\,\d s\,\d\sigma$, we also have
%\begin{equation}
%\label{eq:Lambda-integrability2}
%\int_{\mb B^n}\exp\bigl(\Lambda(x)\bigr)\,\d x
%\leq C_n\int_0^\infty
%\exp\left(-ns+16M\log^2(S+s)\right)\,\d s<\infty,
%\end{equation}
%which shows \eqref{eq:expint}.
Now \eqref{eq:Lambda} follows immediately.

\medskip
\textbf{Step 5: membership in $H^1(\mb B^n,A)$.} Since $a\leq 2$,  we first note the estimates
\begin{equation}
\label{eq:finiteenergy}
|u|\leq2,\qquad
|\D_TU|\leq C_n\gamma,\qquad
|U_s|\leq \frac{C_n}{M},\qquad
 |q|\leq C_n\gamma^2.
\end{equation}
Indeed, the first estimate follows from \eqref{eq:ansatz2} and
$|\phi_\gamma|\leq 1$, and the second from \eqref{eq:U-derivatives} and
$\phi'_\gamma\leq C\gamma$. For the last inequality, first note that by \eqref{eq:q-pole}, the monotonicity
of $\mu_\gamma$ and $\mu_\gamma\leq C_n\gamma^2$, cf.\ \eqref{eq:boundmu2}, we have \(q(s,1)\le \frac{2}{c}\mu_{\gamma(s)}
\le C_n\gamma(s)^2\). Hence \eqref{eq:q-difference-bound2}, together with
$\phi'_\gamma\leq C\gamma$, give $|q|\leq C_n\gamma^2$. Finally,
\eqref{eq:q-Us-bound2}, together with the bound for $q$, gives
$|U_s|\leq C_n/M$.

Moreover, \eqref{eq:Y-bound2} applied to \eqref{eq:defB2} gives
\[
B(\xi_r,\xi_T)\cdot(\xi_r,\xi_T)
\leq C_n\left(M\gamma^2\xi_r^2+|\xi_T|^2\right).
\]
Applying this with
$(\xi_r,\xi_T)=r^{-1}(-U_s,\D_TU)$ gives
\[
B\,\D u\cdot\D u
\leq\frac{C_n}{r^2}
\left(M\gamma^2|U_s|^2+|\D_TU|^2\right).
\]
Since $A\,\D u=4B\,\D u$, \eqref{eq:finiteenergy} and spherical
coordinates yield
\[
\int_{\mb B^n}\bigl(\Lambda u^2+A\D u\cdot\D u\bigr)\,\d x
\leq C_n\int_0^\infty
\left(e^{-ns}+e^{-cs}\right)\gamma(s)^2\,\d s<\infty.
\]
To verify membership in the completion defining $H^1(\mb B^n,A)$, let
$\chi_R\in C^\infty([0,\infty))$ equal $1$ on $[0,R]$ and $0$ on
$[R+1,\infty)$, with $|\chi_R'|\leq C$, and set
$u_R(r\omega)=\chi_R(s)u(r\omega)$. Then
$u_R\in C^1(\mb B^n)$. Writing $w_R=u-u_R$, since $A$ is symmetric
and positive definite,
\[
A\D w_R\cdot\D w_R
\leq2(1-\chi_R)^2A\D u\cdot\D u
+2A(u\D\chi_R)\cdot(u\D\chi_R).
\]
The first term on the right, as well as $\Lambda w_R^2$, converges to zero in $L^1$
by the preceding estimate, while
\[
\int_{\mb B^n}A(u\D\chi_R)\cdot(u\D\chi_R)\,\d x
\leq C_n\int_R^{R+1}e^{-cs}\gamma(s)^2\,\d s\to 0,
\]
so $u_R\to u$ in $H^1(\mb B^n,A)$.

\medskip
\textbf{Step 6: conclusion.} By \eqref{eq:ODEq} and \eqref{eq:q2},
$\ddiv(B\,\D u)=0$ away from the origin. Also,
\eqref{eq:desirableB} and $A\,\D u=4B\,\D u$ give
$(A\D u)(r\omega)\cdot\omega=4q(s,\omega)/r$. Hence, for
$\varphi\in C_c^\infty(\mb B^n)$, \eqref{eq:finiteenergy} and
integration by parts outside $\mb B_r^n$ give
\[
\left|\int_{\partial\mb B_r^n}\varphi A\D u\cdot\omega\,\d\sigma\right|
\leq C_n\|\varphi\|_{L^\infty}r^{n-2}
\log^2\left(S+\log\frac1r\right)\to0
\]
since $n\geq3$.
Thus $\int_{\mb B^n}A\D u\cdot\D\varphi\,\d x=0$ and by density this holds
for every $\varphi\in H^1_0(\mb B^n,A)$.

Finally, by \eqref{eq:propsa}, $a(s)\to1$, while
$\phi_{\gamma(s)}(\omega_n)\to\sgn(\omega_n)$ locally uniformly away
from the equator, as $s\to \infty$. Consequently $u(r\omega)\to1$ on every cone
$\{\omega_n\geq\delta\}$, and $u(r\omega)\to-1$ on every cone
$\{\omega_n\leq-\delta\}$, where $\delta>0$. It follows that no representative of $u$ is continuous at the origin. 
\end{proof}

\begin{proof}[Proof of Lemma \ref{lem:angular}]
Direct differentiation gives
\begin{equation}\label{eq:parameter-derivative}
  \partial_\gamma\phi_\gamma(\omega_n)
  =\frac1\gamma\phi_\gamma'(\omega_n)
  \left(\omega_n-\frac{\sinh(2\gamma \omega_n)}{\sinh(2\gamma)}\right).
\end{equation}
Also, using \eqref{eq:axisymmetric-laplacian} and
$\phi_\gamma''=-2\gamma\tanh(\gamma \omega_n)\phi_\gamma'$, one obtains
\begin{align}
  -\Delta_T\phi_\gamma-\mu_\gamma\phi_\gamma
  ={}&2\gamma(1-\omega_n^2)\tanh(\gamma \omega_n)\phi_\gamma'
  +(n-1)\phi_\gamma'
  \left(\omega_n-\frac{\sinh(2\gamma \omega_n)}{\sinh(2\gamma)}\right).
  \label{eq:laplacian-identity}
\end{align}
Here we used the exact identity, cf.\ \eqref{eq:defmu},
\[
  \mu_\gamma\phi_\gamma(\omega_n)
  =(n-1)\phi_\gamma'(\omega_n)
  \frac{\sinh(2\gamma \omega_n)}{\sinh(2\gamma)}.
\]
We claim that
\begin{equation}\label{eq:bracket-bound}
 \left|F_\gamma(\omega_n)\right|\equiv  \left|\omega_n-\frac{\sinh(2\gamma \omega_n)}{\sinh(2\gamma)}\right|
  \leq C\gamma(1-\omega_n^2).
\end{equation}
Both sides are even, so it suffices to
consider $0\leq\omega_n\leq1$, and the claim follows from $F_\gamma(1)=0$ and 
$|F'_\gamma|\leq 1+2\gamma\coth(2\gamma)\leq C\gamma$. Equations
\eqref{eq:parameter-derivative}--\eqref{eq:bracket-bound} prove
\eqref{eq:angular-1} and \eqref{eq:angular-2}. Furthermore, for $\eta \geq \gamma$,
\[
  \frac{\phi_\eta'(\omega_n)}{\phi_\gamma'(\omega_n)}
  =\frac\eta\gamma\frac{\tanh\gamma}{\tanh\eta}
  \frac{\cosh^2(\gamma \omega_n)}{\cosh^2(\eta \omega_n)}
  \leq\frac\eta\gamma,
\]
which proves \eqref{eq:angular-3}. Finally, integrate
\eqref{eq:angular-1} with respect to the parameter and use
\eqref{eq:angular-3}:
\[
  |\phi_\eta(\omega_n)-\phi_\gamma(\omega_n)|
  \leq C_n(1-\omega_n^2)\phi_\gamma'(\omega_n)
  \int_\gamma^\eta\frac\kappa\gamma\,\d\kappa
  \leq C_n\frac{\eta^2-\gamma^2}{\gamma}
  (1-\omega_n^2)\phi_\gamma'(\omega_n).
\]
This is \eqref{eq:angular-4}.
\end{proof}

We conclude the paper by briefly commenting on how \cite{Armstrong2026a} can be used to see that condition \eqref{eq:doubleexp} yields continuity of solutions; this was communicated to the author by one of the referees. In the next remark we follow the notation in \cite{Armstrong2026a}.

\begin{remark}\label{rem:coarse}
Fix H\"older conjugate exponents $p,q\in(1,\infty)$ and a cube
$Q_0\Subset\mb B^n$.
The limiting case where $p=\infty, q=1$ or vice-versa follows from a similar argument.
By  \cite[Lemma 2.4]{Armstrong2026a}, 
for every cube $R\subset Q_0$ and every admissible cone $\mathcal C$, we have
\[
 \|A(R;\mathcal C)\|\leq\fint_R\Lambda\,\d x,
 \qquad \|A_*^{-1}(R)\|\leq\fint_R\lambda^{-1}\,\d x.
\]
Jensen's inequality, after increasing constants to take care of small
values of $\lambda^{-1}$ and $\Lambda$, gives
\[
\begin{aligned}
 \left(\fint_R\lambda^{-1}\,\d x\right)^{1/2}
 &\leq C\left(1+\left[c^{-1}\log\log\left(
 \fint_R\exp \exp(c\lambda^{-p/2})\,\d x\right)\right]^{1/p}\right),\\
 \left(\fint_R\Lambda\,\d x\right)^{1/2}
 &\leq C\left(1+\left[c^{-1}\log\log\left(
 \fint_R\exp \exp(c\Lambda^{q/2})\,\d x\right)\right]^{1/q}\right).
\end{aligned}
\]
Since the last integrals are bounded by the fixed local integrals over
$Q_0$, this yields, for descendants $R$ of relative depth $h$ inside a
cube $Q_j$ of generation $j$,
\[
\begin{aligned}
 \left(\fint_R\lambda^{-1}\,\d x\right)^{1/2}
 &\leq C_0+C\left(\frac{\log(j+h+2)}c\right)^{1/p},\\
 \left(\fint_R\Lambda\,\d x\right)^{1/2}
 &\leq C_0+C\left(\frac{\log(j+h+2)}c\right)^{1/q}.
\end{aligned}
\]
Inserting these estimates into the definitions of $\lambda_t$ and
$\Lambda_s$ from \cite{Armstrong2026a} gives, for fixed $s,t\in(0,1)$ with $s+t<1$, and for a constant $C$ independent of $j$ and $c$,
\[
 \lambda_t(Q_j)^{-1/2}
 \leq C_0+C\left(\frac{\log(j+2)}c\right)^{1/p},
 \qquad
 \Lambda_s(Q_j;\mathcal C)^{1/2}
 \leq C_0+C\left(\frac{\log(j+2)}c\right)^{1/q}.
\]
Since $1/p+1/q=1$, it follows that
\[
 \Theta_{s,t}(Q_j;\mathcal C)^{1/2}
 \equiv \Lambda_s(Q_j;\mathcal C)^{1/2}\lambda_t(Q_j)^{-1/2}
 \leq C_0+\frac Cc\log(j+2).
\]
Hence Theorem 1.2 of \cite{Armstrong2026a} gives a Harnack constant
$H_j\leq C_0(j+2)^{C/c}$ on $Q_j$.

Choose nested cubes with $Q_{j+1}\subset\frac18Q_j$. The local
boundedness theorem \cite[Theorem 1.1]{Armstrong2026a} makes
$m_j=\operatorname*{ess\,inf}_{Q_j}u$ and
$M_j=\operatorname*{ess\,sup}_{Q_j}u$ finite. Applying Harnack to
$u-m_j$ and $M_j-u$ gives
\[
 \mathop{\rm osc}_{Q_{j+1}}u
 \leq\frac{H_j-1}{H_j+1}\mathop{\rm osc}_{Q_j}u.
\]
Thus the oscillation tends to zero whenever $\sum_jH_j^{-1}=\infty$, which happens if $c$ is sufficiently large. Since $Q_0$ was arbitrary, $u$ has a
continuous representative. 
\end{remark}

%The following code condenses the bibliography
\let\oldthebibliography\thebibliography
\let\endoldthebibliography\endthebibliography
\renewenvironment{thebibliography}[1]{
  \begin{oldthebibliography}{#1}
    \setlength{\itemsep}{0.5pt}
    \setlength{\parskip}{0.5pt}
}
{
  \end{oldthebibliography}
}

	{\small
	\bibliographystyle{abbrv-andre}
	\bibliography{/Users/andreguerra/Library/CloudStorage/Dropbox/Projects/library.bib,DGNM-exponent-local}
	}

\end{document}